\theoremstyle{plain}
\newtheorem{thm}{Theorem}
\newtheorem{cor}[thm]{Corollary}
\newtheorem{cl}[thm]{Claim}
\newtheorem{conj}[thm]{Conjecture}
\theoremstyle{definition}
\title{A note on $\mathtt{V}$-free $2$-matchings}
\author{
Krist\'of B\'erczi\thanks{MTA-ELTE Egerv\'ary Research Group, Department of Operations Research, E\"otv\"os University, Budapest, Hungary. E-mail: {\tt berkri@cs.elte.hu}.}
\and
Attila Bern\'ath\thanks{MTA-ELTE Egerv\'ary Research Group, Department of Operations Research, E\"otv\"os University, Budapest, Hungary. E-mail: {\tt bernath@cs.elte.hu}.}
\and
M\'at\'e Vizer\thanks{Alfr\'ed R\'enyi Institute of Mathematics, P.O.B. 127, Budapest H-1364, Hungary. Email:
{\tt vizermate@gmail.com}.}}
\newcommand{\cE}{\ensuremath{\mathcal{E}}}
\newcommand{\cF}{\ensuremath{\mathcal{F}}}
\begin{document}
\maketitle

\begin{abstract}
Motivated by a conjecture of Liang [Y.-C. Liang. {\em Anti-magic labeling of graphs}. PhD thesis, National Sun Yat-sen University, 2013.], we introduce a restricted path packing problem in bipartite graphs that we call a $\mathtt{V}$-free $2$-matching. We verify the conjecture through a weakening of the hypergraph matching problem. We close the paper by showing that it is NP-complete to decide whether one of the color classes of a bipartite graph can be covered by a $\mathtt{V}$-free $2$-matching.
\end{abstract}

\section{Introduction} \label{sec:intro}

Throughout the paper, graphs are assumed to be simple. Given an undirected graph $G=(V,E)$ and a subset $F\subseteq E$ of edges, $F(v)$ denotes the set of edges in $F$ incident to a node $v\in V$, and $d_F(v):=|F(v)|$ is the \textbf{degree} of $v$ in $F$. We say that $F$ \textbf{covers} a subset of nodes $X\subseteq V$ if $d_F(v)\geq 1$ for every $v\in X$. Let $b:V\rightarrow\mathbb{Z}_+$ be an upper bound function. A subset $N\subseteq E$ of edges is called a {\bf $b$-matching} if $d_N(v)$ is at most $b(v)$ for every node $v\in V$. For some integer $t\ge 2$, by a {\bf $t$-matching} we mean a $b$-matching where $b(v)=t$ for every $v\in V$. If $t=1$, then a $t$-matching is simply called a \textbf{matching}.

A \textbf{hypergraph} is a pair $H=(V,\cE)$ where $V$ is a finite set
of nodes and $\cE$ is a collection of subsets of $V$. The members of
$\cE$ are called \textbf{hyperedges}, and for a hyperedge $e\in \cE$
let $|e|$ denote its cardinality (as a subset of $V$). In hypergraphs
--unlike in graphs-- we will allow hyperedges of cardinality 1 in this
paper. A \textbf{matching} in a hypergraph is a collection of pairwise
disjoint hyperedges, and the matching is said to be perfect if the
union of the hyperedges in the matching contains every node. The
\textbf{hypergraph matching problem} is to decide whether a given hypergraph has a perfect matching. Given a hypergraph $H=(V,\cE)$,
we can represent it as a bipartite graph $G_H=(U_V, U_\cE; E)$, where
nodes of $U_V$ correspond to nodes in $V$, nodes in $U_\cE$ correspond
to hyperedges in $\cE$, and there is an edge in $G$ between a node
$u_v\in U_V$ (corresponding to $v\in V$) and a node $u_e\in U_\cE$
(corresponding to $e\in \cE$) if and only if $v\in e$ ($G_H$ is also
called the Levi graph of $H$).

Let $G=(S,T;E)$ be a bipartite graph. A path $P=(\{u,v,w\}, \{uv,vw\})$ of length
$2$ with $u,w\in S$ is called an \textbf{$S$-link}, and a
\textbf{$T$-link} can be defined analogously. In \cite{Liang13}, Liang
proposed the following conjecture and showed that, if it is true, the
conjecture implies that $4$-regular graphs are antimagic (where a
simple graph $G=(V,E)$ is said to be \textbf{antimagic} if there exists a bijection $f:E\to \{1,2,\dots, |E|\}$ such that $\sum_{e\in E(v_1)}f(e)\ne \sum_{e\in E(v_2)}f(e)$ for every pair $v_1,v_2\in V$).

\begin{conj} \label{conj:liang}
Assume that $G=(S,T;E)$ is a bipartite graph such that each node in $S$ has degree at most $4$ and each node in $T$ has degree at most $3$. Then $G$ has a matching $M$ and a family $\mathcal{F}$ of node-disjoint $S$-links such that every node $v\in T$ of degree $3$ is covered by an edge in $M\cup(\cup_{P\in\mathcal{F}} P)$.
\end{conj}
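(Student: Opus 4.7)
The plan is to recast Conjecture~\ref{conj:liang} as a hypergraph matching problem, following the hint in the abstract, and then verify a sufficient weakening. Let $T_3\subseteq T$ be the set of degree-$3$ nodes. Each $v\in T_3$ can be covered only by a matching edge $vs$ with $s\in N(v)$, or by an $S$-link $s_1vs_2$ with $s_1,s_2\in N(v)$. Without loss of generality (by discarding redundant cover), one may assume each $v\in T_3$ is covered in exactly one of these two ways. To allow $M$ and $\cF$ to share $S$-nodes freely, introduce two disjoint copies $S^M,S^L$ of $S$ (tracking the matching-partner and link-endpoint roles), and form a hypergraph $H$ on $T_3\cup S^M\cup S^L$ with hyperedges
\[
\{v,s^M\}\quad(v\in T_3,\,s\in N(v)),\qquad
\{v,s_1^L,s_2^L\}\quad(v\in T_3,\,\{s_1,s_2\}\subseteq N(v)).
\]
A hypergraph matching in $H$ that covers all of $T_3$ is then equivalent to a pair $(M,\cF)$ satisfying the conjecture.

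Since $H$ has hyperedges of size $2$ and $3$, general hypergraph matching theory does not apply directly, and some weakening is required. The plan is to formulate and prove a Hall-type sufficient condition (or apply a Haxell-style independent-transversal theorem) tailored to this restricted class, and to verify it using the degree bounds $d_S\le 4,\ d_T\le 3$. The structural bounds on $H$ are favourable: each $v\in T_3$ has exactly $3+\binom{3}{2}=6$ options, each $s^M$ lies in at most $d(s)\le 4$ hyperedges, and each $s^L$ lies in at most $2d(s)\le 8$. Alternatively, one might seek a polynomially solvable auxiliary problem (for example a $b$-matching in a suitably constructed bipartite graph, or a matroid intersection instance) whose feasibility encodes the hypergraph matching and hence the desired V-free $2$-matching.

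The main obstacle is the coexistence of two hyperedge sizes. Singleton-in-$S$ options (matching edges) compete with pair-in-$S$ options ($S$-links) for the same $S$-nodes, and a naive Hall condition on either kind alone is insufficient: the slack provided by the three pair options at each $v\in T_3$ is exactly what makes the covering possible. Identifying this slack and ruling out small Hall-blocking subsets $X\subseteq T_3$, whose combined neighborhood in $S$ is unusually constrained, will be the crux of the argument. The degree bound $d_S\le 4$ is expected to enter in a tight and essential way here, most plausibly by limiting the number of $T_3$-nodes that may compete for a single $s\in S$ and thus guaranteeing that the pair options at any offending $X$ are never all blocked simultaneously.
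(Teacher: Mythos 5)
Your reformulation is sound as far as it goes: modelling the two ways a degree-$3$ node $v\in T$ can be covered as hyperedges $\{v,s^M\}$ and $\{v,s_1^L,s_2^L\}$ on duplicated copies of $S$ does correctly encode the conjecture as a disjoint-representatives problem. But everything after that is a plan, not a proof. The crux --- ``ruling out Hall-blocking subsets'' --- is exactly the content of the conjecture, and you leave it open. Worse, the specific tool you name cannot close it: an Aharoni--Haxell/Haxell-type sufficient condition for systems of disjoint representatives with sets of size at most $r=2$ requires, for every $I\subseteq T_3$, a collection of more than $2(|I|-1)$ pairwise disjoint option-sets drawn from $\bigcup_{v\in I}A_v$. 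Take $G$ with every $S$-degree equal to $4$ and every $T$-degree equal to $3$, so $|T_3|=\tfrac{4}{3}|S|$ and the requirement for $I=T_3$ is a matching of size about $\tfrac{8}{3}|S|$; but the ground set $S^M\cup S^L$ admits at most $|S|+\tfrac{1}{2}|S|=\tfrac{3}{2}|S|$ disjoint option-sets. The hypothesis fails by a constant factor on precisely the extremal instances, so this route is not merely unfinished --- it is blocked. The fallback suggestions (a $b$-matching or matroid-intersection encoding) would at best show the problem is polynomially decidable, which does not establish that the answer is always ``yes.''

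For contrast, the paper works with the dual hypergraph: it takes $H=(S,\mathcal{E})$ whose hyperedges are the neighbourhoods $N(v)$ for $v\in T$ (a $3$-uniform hypergraph with degrees at most $4$ on $S$), and first covers the degree-$4$ nodes of $S$ by an \emph{extended matching}, i.e.\ disjoint $S$-claws and $S$-links. The point of this order of operations is that after deleting those edges every node of $S$ has degree at most $3$, so the remaining degree-$3$ nodes of $T$ are nodes of maximum degree in the residual bipartite graph and are covered by an ordinary matching (Theorem~\ref{thm:max}); finally one edge is dropped from each claw. The existence of the extended matching covering the maximum-degree nodes of $S$ (Theorem~\ref{thm:3unif}, via Theorem~\ref{thm:psm}) is itself proved by expanding each hyperedge into a triangle and running the Gallai--Edmonds decomposition together with Dulmage--Mendelsohn. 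This two-stage, $S$-side argument is the idea your proposal is missing; a one-shot Hall-type covering of $T_3$ of the kind you sketch does not appear to be salvageable in the stated form.
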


Observe that it suffices to verify the conjecture for the special case when each node in $T$ has degree exactly $3$, as we can simply delete nodes of degree less than $3$. Although it was recently proved that regular graphs are antimagic \cite{BBV15}, we prove the conjecture in Section~\ref{sec:ext} as it is interesting in its own. The proof is based on a weakening of the hypergraph matching problem.

While working on the proof of the conjecture, an interesting restricted path factor problem came to our attention. For simplicity, we will call a $T$-link a \textbf{$\mathtt{V}$-path} (the name comes from the shape of these paths when $T$ is placed `above' $S$, see Figure \ref{fig:conj} for an illustration). It is easy to see that a $2$-matching consists of pairwise node-disjoint paths and cycles. We call a $2$-matching  \textbf{$\mathtt{V}$-free} if it does not contain a $\mathtt{V}$-path as a connected component.

\begin{figure}
\begin{center}
\input{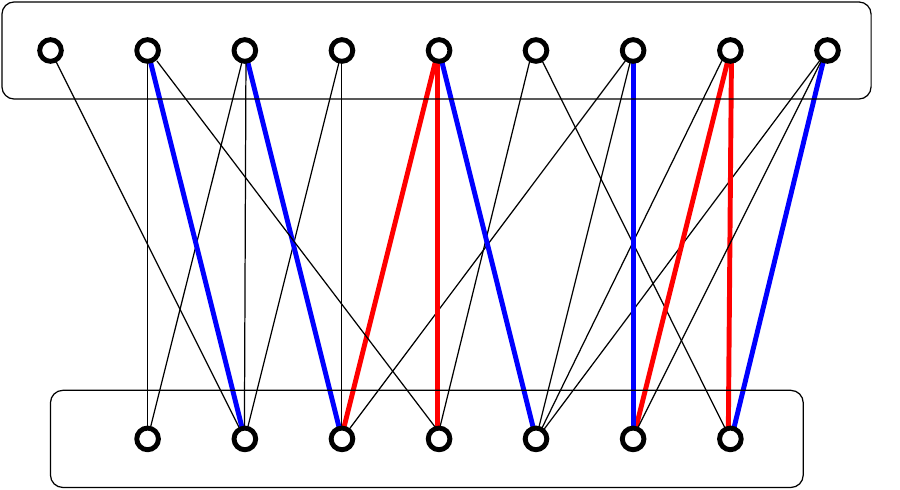_t}
\caption{An illustration for Liang's conjecture. Nodes in $T$ have degree at most 3, and those in $S$ have degree at most 4. The matching is highlighted with blue, the family of $S$-links is highlighted with red.}
\label{fig:conj}
\end{center}
\end{figure}

Consider the problem of finding a matching $M$ and a family $\mathcal{F}$ of node-disjoint $S$-links such that $M\cup(\cup_{P\in\mathcal{F}} P)$ covers $T$. We can assume that $M$ does not contain any edge of $\bigcup\cF$, as such edges can be simply deleted from $M$.
Furthermore, we may assume that each node $v\in T$ has degree at most $2$ in $M\cup(\cup_{P\in\mathcal{F}} P)$. Indeed, if a node $v\in T$ has degree $3$ in $M\cup(\cup_{P\in\mathcal{F}} P)$ then it is covered by both $M$ and $(\cup_{P\in\mathcal{F}} P)$, so the edge in $M$ incident to $v$ can be deleted (see Figure \ref{fig:conj}). It is not difficult to see that $M\cup(\cup_{P\in\mathcal{F}} P)$ is a $\mathtt{V}$-free $2$-matching covering $T$ in this case.

Conversely, given an arbitrary $\mathtt{V}$-free $2$-matching $N$ that covers $T$, edges can be left out from $N$ in such a way that the resulting $\mathtt{V}$-free $2$-matching $N'$ still covers $T$ and consists of paths of length $1$ and $4$, the latter having both end-nodes in $T$. Then $N'$ can be partitioned into a matching and a family of node-disjoint $S$-links.

By the above, the problem of finding a matching $M$ and a family $\mathcal{F}$ of node-disjoint $S$-links whose union covers $T$ is equivalent to finding a $\mathtt{V}$-free $2$-matching $N$ that covers $T$. The proof of Conjecture~\ref{conj:liang} shows that these problems can be solved when  nodes in $S$  have degree at most 4, and those in $T$ have degree at most  $3$. However, in Section~\ref{sec:np} we show that the problem of finding a $\mathtt{V}$-free $2$-matching in a bipartite graph $G=(S,T;E)$ covering $T$ is NP-complete in general.

Let us now recall some well known results from matching theory that will be used below.

\begin{thm}\label{thm:max}
In a bipartite graph there exists a matching that covers every node of maximum degree.
\end{thm}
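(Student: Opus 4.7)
The plan is to deduce this from König's edge coloring theorem, which asserts that every bipartite graph $G$ admits a proper edge coloring using exactly $\Delta$ colors, where $\Delta$ denotes the maximum degree. Fixing such a coloring $E=E_1\cup\dots\cup E_\Delta$, each class $E_i$ is by definition a matching. If $v$ is a node with $d_G(v)=\Delta$, then its $\Delta$ incident edges must receive $\Delta$ pairwise distinct colors, so exactly one edge of each $E_i$ is incident to $v$. Thus any single color class already gives a matching that covers every node of maximum degree, and the statement follows.

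If one prefers to avoid quoting König's edge coloring directly, essentially the same conclusion can be reached through a regularization argument. Take two disjoint copies $G_1,G_2$ of $G$ with bipartitions $(S_i,T_i)$, and for every node $v$ of $G$ with $d_G(v)<\Delta$ add $\Delta-d_G(v)$ parallel edges between the two copies $v_1\in G_1$ and $v_2\in G_2$. The resulting bipartite multigraph with color classes $S_1\cup T_2$ and $T_1\cup S_2$ is $\Delta$-regular, hence admits a perfect matching $M'$ (for instance by Hall's theorem, as any non-empty regular bipartite multigraph satisfies Hall's condition by a simple double counting). By construction, a node of $G$ with $d_G(v)=\Delta$ is incident in $G'$ only to its original edges; consequently the edge of $M'$ covering such a node in $G_1$ must lie in $E(G_1)$. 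Restricting $M'$ to $E(G_1)$ and identifying $G_1$ with $G$ therefore yields a matching of $G$ covering every node of maximum degree.

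The only choice in the proof is which classical fact to invoke as a black box; there is no genuine obstacle, and I would likely use the first route since it is a one-line consequence of edge colorability.
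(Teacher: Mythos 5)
Your proposal is correct. Note, however, that the paper does not prove this statement at all: it is recalled as a ``well known result from matching theory'' and used as a black box, so there is no proof in the paper to compare against. Of your two routes, the first (take any colour class of a proper $\Delta$-edge-colouring guaranteed by K\H{o}nig's edge colouring theorem) is fine, and the second is in essence the standard proof of that very colouring theorem for bipartite graphs: embedding $G$ into a $\Delta$-regular bipartite multigraph and extracting a perfect matching. So the two routes are not really independent, and I would state only one. A third derivation worth mentioning stays entirely inside the toolkit the paper itself sets up: for the set $X$ of maximum-degree nodes in $S$, double counting gives $\Delta|X'|\le e(X',N(X'))\le \Delta|N(X')|$ for every $X'\subseteq X$, so Hall's condition holds and some matching covers $X$; symmetrically some matching covers the maximum-degree nodes of $T$; and Theorem~\ref{thm:MD} (Dulmage--Mendelsohn) merges the two. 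Any of these is acceptable; just be aware of the degenerate convention that if $\Delta=0$ the claim is read vacuously.
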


\begin{thm}[Dulmage and Mendelsohn \cite{Dulmage58}]\label{thm:MD}
Given a bipartite graph $G=(S, T; E)$ and subsets $X\subseteq S$, $Y\subseteq T$, if there exist two matchings $M_X$ and $M_Y$ in $G$ such that
$M_X$ covers $X$ and $M_Y$ covers $Y$ then there exists a matching $M$ in $G$ that covers $X\cup Y$.
\end{thm}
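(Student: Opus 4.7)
The plan is to analyze the edge-union $H := M_X \cup M_Y$ as a subgraph of $G$. Since both $M_X$ and $M_Y$ are matchings, every vertex has degree at most $2$ in $H$, so $H$ decomposes into connected components each of which is an isolated vertex, a path, or an even cycle (the evenness coming from bipartiteness). I would show that within each component $C$ one can select a matching $M_C$ using only edges of $C$ that covers $(X \cup Y) \cap V(C)$; then $M := \bigcup_C M_C$ is the desired matching, since an isolated vertex of $H$ cannot lie in $X \cup Y$ (any vertex of $X$ is covered by $M_X$ and so has positive $H$-degree, and similarly for $Y$).

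For a non-trivial component $C$ I would proceed by cases. If $C$ is a cycle, then the $M_X$-edges inside $C$ already form a perfect matching of $V(C)$, so I pick those. If $C$ is a path $v_0 v_1 \cdots v_\ell$, then its edges alternate between $M_X$ and $M_Y$: the alternation is forced at every interior vertex of degree $2$, and an edge of $M_X \cap M_Y$ can only appear as an isolated length-$1$ component, which is included as is. The key observation is that if an endpoint $v$ of the path has its unique $H$-edge in $M_X \setminus M_Y$, then $v$ has no $M_Y$-edge at all (any such edge would have to lie in the component $C$ but no other edge is incident to $v$), so $v \notin Y$; symmetrically for $M_Y \setminus M_X$. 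Combining this with $X \subseteq S$ and $Y \subseteq T$, a short check on the parity of $\ell$ and on which side ($S$ or $T$) each endpoint lies on shows that one of the two alternating sub-matchings of $C$ covers $(X \cup Y) \cap V(C)$: any vertex that is not covered by the chosen sub-matching is an endpoint which, by the above observation combined with bipartite parity, turns out to lie outside $X \cup Y$.

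The main difficulty I anticipate is the bookkeeping in this case check for path components, together with handling shared edges in $M_X \cap M_Y$ cleanly; beyond that, the argument stays entirely local to the components of $H$ and uses nothing beyond the definition of a matching. I therefore expect the proof to be elementary, with no need for Hall's theorem or augmenting-path arguments.
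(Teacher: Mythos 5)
Your argument is correct and complete. Note that the paper does not prove this statement at all: it quotes it as a classical result of Dulmage and Mendelsohn with a citation, so there is no in-paper proof to compare against. What you give is the standard proof of that theorem --- take the symmetric structure of $M_X\cup M_Y$, observe that components are isolated vertices, alternating paths, or even cycles, and select one of the two alternating edge classes per component; your key points (an isolated vertex cannot lie in $X\cup Y$, an endpoint whose unique incident edge lies in $M_X\setminus M_Y$ cannot lie in $Y$, and for an even-length path the two endpoints lie on the same side of the bipartition so at most one of them can belong to $X\cup Y$) are exactly what makes the case analysis close, and the reduction of $M_X\cap M_Y$ edges to isolated length-one components is handled correctly.
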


\begin{thm}[Gallai-Edmonds Decomposition Theorem for graphs, see eg. \cite{Lovasz}]\label{thm:GE}
Given a graph $G=(V, E)$, let $D$ be the set of nodes which are not
covered by at least one maximum matching of $G$, $A$ be the set of
neighbours of $D$ and $C:=V-(D\cup A)$. Then \textbf{(a)} the
components of $G[D]$ are factor-critical, \textbf{(b)} $G[C]$ has a
perfect matching, and \textbf{(c)} $G$ has a matching covering $A$.
\end{thm}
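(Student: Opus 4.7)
The plan is to follow the textbook proof based on Berge's theorem (a matching $M$ is maximum if and only if $G$ admits no $M$-augmenting path) and analysis of symmetric differences of maximum matchings. First I would establish a structural lemma about arbitrary maximum matchings, and then derive (a)--(c) from it.

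The structural lemma would say: for every maximum matching $M$ of $G$, (i) $M$ saturates $A \cup C$; (ii) each node of $A$ is matched by $M$ to some node of $D$; and (iii) on each component $K$ of $G[D]$, $M \cap E(K)$ is a near-perfect matching of $K$. To prove this, pick any $v \in D$ and a maximum matching $M_v$ exposing $v$, and look at the connected component of $M \triangle M_v$ containing $v$. Since $M$ and $M_v$ have the same size and both are maximum, this component is an alternating path from $v$ ending at another node $w$ that must be exposed by $M$, and hence $w \in D$. Iterating this principle shows that any node reachable from an exposed node by an $M$-alternating path also lies in $D$; in particular no $M$-edge joins $D$ to $C$, so every node of $A$ is matched by $M$ into $D$, and (i)--(iii) follow.

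Parts (b) and (c) of the theorem are then immediate: $M$ restricted to $C$ is a perfect matching of $G[C]$, and every node of $A$ is $M$-saturated. For (a), I would fix a component $K$ of $G[D]$ and a vertex $v \in K$, and aim to produce a perfect matching of $K - v$. By definition of $D$ there is some maximum matching $M$ and some vertex $u \in K$ exposed by $M$, and by the structural lemma $M \cap E(K)$ is a near-perfect matching of $K$ missing $u$. If $u = v$ we are done, and otherwise I would construct a maximum matching exposing $v$ by taking a shortest $M$-alternating path in $G$ from $u$ to $v$ and swapping its edges along a suitable initial segment; the structural lemma forces this walk to stay inside $K$, so the resulting near-perfect matching of $K$ misses $v$, as desired.

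The hard part will be the last step: verifying that the alternating transfer used to move the exposed vertex from $u$ to $v$ stays inside the component $K$. This rests on the structural lemma in its strong form, since any alternating path starting in $D$ and entering $A \cup C$ would contradict either the maximality of $M$ or the definition of $D$. Once this containment is in hand, the remainder of the argument is routine bookkeeping of alternating paths via Berge's theorem.
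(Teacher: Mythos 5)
The paper offers no proof of Theorem~\ref{thm:GE}: it is quoted as a classical result with a pointer to \cite{Lovasz}, so there is nothing internal to compare your argument against and I can only judge it on its own terms. On those terms there is a genuine gap, concentrated in clause (iii) of your structural lemma --- that every maximum matching $M$ induces a near-perfect matching on each component $K$ of $G[D]$. Your alternating-path analysis (symmetric difference of $M$ with a maximum matching exposing $v\in D$) does show that the other endpoint of the resulting path is $M$-exposed and hence in $D$, and that $M$ saturates $V-D\supseteq A\cup C$, which is clause (i). But ``(i)--(iii) follow'' is not justified. For (ii), the inference ``no $M$-edge joins $D$ to $C$, so every node of $A$ is matched by $M$ into $D$'' is a non sequitur: by the definition of $A$ and $C$ there are no $D$--$C$ edges of any kind, so the premise is vacuous, and it does not exclude a node of $A$ being matched to another node of $A$ or to a node of $C$; ruling that out requires extending the even $M$-alternating path ending at $v\in D$ by the edge $va$ and then by the matching edge $ab$, with a case analysis for when $a$ or $b$ already lies on the path. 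Clause (iii) is never argued at all, and it is the technical heart of the theorem: it is essentially Gallai's lemma (a connected graph all of whose vertices are missed by some maximum matching is factor-critical) combined with the Berge--Tutte counting, and no amount of single-path surgery in your sketch produces it.

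The final step for (a) also does not work as described. You propose to move the exposed vertex from $u$ to $v$ along a shortest $M$-alternating path confined to $K$, justified by the claim that an alternating path starting in $D$ cannot enter $A\cup C$. That claim is false: an even $M$-alternating path from an $M$-exposed node routinely passes through $A$ at its odd positions (this is exactly how distinct components of $G[D]$ interact), so the path may leave $K$ and return elsewhere; and inside $K$, which is a general non-bipartite graph (note the paper applies the theorem to a graph built by replacing hyperedges with cliques), alternating reachability between two prescribed vertices is precisely where odd cycles force the blossom argument. The clean route is the reverse of yours: \emph{given} (iii), take any maximum matching $M_v$ exposing $v$ (it exists since $v\in D$); then $M_v\cap E(K)$ is near-perfect on $K$ and the unique node of $K$ it misses must be $v$, so $K-v$ has a perfect matching. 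But this only pushes the burden back onto (iii), which your proposal leaves unproven.
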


\bigskip

The paper is organized as follows. Section~\ref{sec:pre} gives a brief overview of earlier results on restricted path packing problems. In Section~\ref{sec:ext}, we introduce a variant of the hypergraph matching problem and prove a general theorem which in turn implies the conjecture. The paper is closed with a complexity result on $\mathtt{V}$-free $2$-matchings in a bipartite graph $G=(S,T, E)$ covering $T$, see Section~\ref{sec:np}.

\section{Previous work} \label{sec:pre}

For a set $\mathcal{F}$ of connected graphs, a spanning subgraph $M$ of a graph $G$ is called an \textbf{$\mathcal{F}$-factor} of $G$ if every component of $M$ is isomorphic to one of the members of $\mathcal{F}$. The \textbf{path} and \textbf{cycle} having $n$ nodes are denoted by $P_n$ and $C_n$, respectively. The \textbf{length} of $P_n$ is $n-1$, the number of its edges.

The problem of packing $\mathcal{F}$-factors is widely studied. Kaneko presented a Tutte-type characterization of graphs admitting a $\{P_n|n\geq 3\}$-factor \cite{kaneko}. Kano, Katona and Kir\'aly \cite{kano04} gave a simpler proof of Kaneko's theorem and also a min-max formula for the maximum number of nodes that can be covered by a $2$-matching not containing a single edge as a connected component. Such a $2$-matching is often called \textbf{$1$-restricted}. These results were further generalized by Hartvigsen, Hell and Szab\'o \cite{hhsz} by introducing the so-called \textbf{$k$-piece packing} problem, where a $k$-piece is a connected graph with highest degree exactly $k$. In contrast with earlier approaches, their result is algorithmic, and so it provides a polynomial time algorithm for finding a $1$-restricted $2$-matching covering a maximum number of nodes. Later Janata, Loebl and Szab\'o \cite{jlsz} described a Gallai-Edmonds type structure theorem for $k$-piece packings and proved that the node sets coverable by $k$-piece packings have a matroidal structure.

In \cite{hartvigsen07}, Hartvigsen considered the edge-max version of the $1$-restricted $2$-matching problem, that is, when a $1$-restricted $2$-matching containing a maximum number of edges is needed. He gave a min-max theorem characterizing the maximum number of edges in such a subgraph, and he also presented a polynomial algorithm for finding one. The notion of $1$-restricted $2$-matchings was generalized by Li \cite{li} by introducing \textbf{$j$-restricted $k$-matchings} that are $k$-matchings with each connected component having at least $j+1$ edges. She considered the node-weighted version of the problem of finding a $j$-restricted $k$-matching in which the total weight of the nodes covered by the edges is maximal and presented a polynomial algorithm for the problem as well as a min-max theorem in the case of $j < k$. She also proved that the problem of maximizing the number of nodes covered by the edges in a $j$-restricted $k$-matching is NP-hard when $j\geq k \geq 2$.

A graph is called \textbf{cubic} if each node has degree $3$. Cycle-factors and path-factors of cubic graphs are well-studied. The fundamental theorem of Petersen states that each $2$-connected cubic graph has a $\{C_n|n\geq 3\}$-factor \cite{petersen}. From Kaneko's theorem it follows that every connected cubic graph has a $\{P_n|n\geq 3\}$-factor. Kawarabayashi, Matsuda, Oda and Ota proved that every $2$-connected cubic graph has a $\{C_n|n\geq 4\}$-factor, and if the graph has order at least six then it also has a $\{P_n|n\geq 6\}$-factor \cite{kawarabayashi}. For bipartite graphs, these results were improved by Kano, Lee and Suzuki by showing that every connected cubic bipartite graph has a $\{C_n|n\geq 6\}$-factor, and if the graph has order at least eight then it also has a $\{P_n|n\geq 8\}$-factor \cite{kcs}.

Although the $\mathtt{V}$-free $2$-matching problem shows lots of similarities to these problems, it does not seem to fit in the framework of earlier approaches.

\section{Extended matchings} \label{sec:ext}

While working on Conjecture \ref{conj:liang}, we arrived at a
relaxation of the hypergraph matching problem that we call the
\textbf{extended matching problem}.  An \textbf{extended matching} of
a hypergraph $H=(V, \cE)$ is a disjoint collection of hyperedges and pairs of
nodes where a pair $(u,v)$ may be used only if there exists a
hyperedge $e\in\mathcal{E}$ with $u,v\in e$. An extended matching is
\textbf{perfect} if it covers the node-set of $H$. Note that one can
decide in polynomial time if a hypergraph has a perfect extended
matching by the results of \cite{cornuejols1988general} (see also Theorem 4.2.16 in
\cite{jacintphd}). Indeed, given a hypergraph $H=(V,\cE)$, consider its
bipartite representation $G_H=(U_V, U_\cE; E)$. Then a perfect extended
matching in $H$ corresponds to a subgraph in $G_H$ in which nodes of
$U_V$ have degree one, and a node $u_e\in U_\cE$ corresponding to
$e\in \cE$ has degree $|e|$, or any even number not greater than
$|e|$.

However, we have found a simple proof of the following result, a special case of the extended matching problem, which implies Conjecture \ref{conj:liang}, as we show below.

\begin{thm}\label{thm:3unif}
In a 3-uniform hypergraph $H=(V, \cE)$ there exists an extended matching that covers the nodes of maximum degree in $H$.
\end{thm}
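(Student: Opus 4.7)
The plan is to apply Theorem~\ref{thm:max} to the Levi graph $G_H=(U_V,U_\cE;E)$ to obtain a matching $M$ covering $\{u_v:v\in V^*\}$, and then to promote each matched hyperedge from a single matched node to a 2-subset or a 3-subset, thereby producing the desired extended matching.

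After disposing of the degenerate cases $\Delta:=\max_v d_H(v)\le 2$ (for $\Delta\le 1$ the hyperedges are pairwise node-disjoint and their union is itself an extended matching; the case $\Delta=2$ admits a short Hall-type argument on the bipartite graph between $V^*$ and the hyperedges meeting it), assume $\Delta\ge 3$. Since $d_{G_H}(u_v)=d_H(v)\ge 3=d_{G_H}(u_e)$ for every $v\in V^*$ and $e\in\cE$, the vertices $\{u_v:v\in V^*\}$ lie among the maximum-degree vertices of $G_H$, so Theorem~\ref{thm:max} yields a matching $M$ covering them. Discarding superfluous edges of $M$ we may assume its $U_V$-side matched set equals $\{u_v:v\in V^*\}$, so $M$ determines an injection $\phi:V^*\to\cE$ with $v\in\phi(v)$. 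Let $\cE_1:=\phi(V^*)$, and for $e\in\cE_1$ write $v_e:=\phi^{-1}(e)$ and $S(e):=e\setminus\{v_e\}$ (the two ``free'' nodes of $e$).

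Now form the auxiliary bipartite graph $B=(\cE_1,V\setminus V^*;E_B)$ with $e\sim w$ iff $w\in S(e)$. If $B$ has a matching saturating $\cE_1$, then pairing each $e\in\cE_1$ with its match $w_e$ yields pairwise disjoint 2-subsets $\{v_e,w_e\}\subseteq e$ that form an extended matching covering $V^*$ (disjointness follows since $\phi$ and the $B$-matching are injections and $V^*\cap(V\setminus V^*)=\emptyset$). Otherwise, by Hall's theorem some $E_0\subseteq\cE_1$ satisfies $|N_B(E_0)|<|E_0|$; combining the estimate $\sum_{e\in E_0}|S(e)\cap(V\setminus V^*)|\le(\Delta-1)|N_B(E_0)|$ (using $d_H(w)\le\Delta-1$ for $w\notin V^*$) with $|S(e)|=2$ yields $\sum_{e\in E_0}|e\cap V^*|>(4-\Delta)|E_0|$, which for $\Delta=3$ forces some $e\in E_0$ with $|e\cap V^*|\ge 2$. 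Using such an $e$ as a full 3-subset covers $v_e$ together with a second (and possibly third) vertex of $V^*$; the hyperedges $\phi(\cdot)$ assigned to these additional $V^*$-vertices become redundant and can be dropped, strictly reducing $|V^*|$, so the construction iterates.

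The hard part will be handling $\Delta\ge 4$, where the above counting becomes vacuous and the reduction need not produce a 3-set candidate directly. I would address this either by reselecting the matching $M$ via Theorem~\ref{thm:MD} (exchanging the assignment $\phi(v)$ of an offending $v$ for another hyperedge containing $v$ so as to destroy the Hall obstruction in $B$) or by invoking Theorem~\ref{thm:GE} on $B$ to isolate and exploit its critical components; the most delicate bookkeeping will be to verify that the 3-set reduction can be iterated without creating fresh Hall obstructions in the smaller instance, since using a hyperedge as a 3-subset consumes certain partner candidates in $V\setminus V^*$ that later steps might otherwise rely upon.
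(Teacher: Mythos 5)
Your proposal has genuine gaps and does not constitute a proof. The central counting step only bites when $\Delta=3$: for $\Delta\ge 4$ the failed Hall condition yields $\sum_{e\in E_0}|e\cap V^*|>(4-\Delta)|E_0|$, which is satisfied trivially (every $e\in\cE_1$ already contains $v_e\in V^*$), so no hyperedge with $|e\cap V^*|\ge 2$ is forced and the argument produces nothing. You acknowledge this and offer two possible repairs (reselecting $\phi$ via Theorem~\ref{thm:MD}, or a Gallai--Edmonds analysis of $B$), but neither is carried out, and it is not clear either would succeed: the obstruction is not merely a bad choice of $M$, since a node $w\in V\setminus V^*$ of degree $\Delta-1$ can genuinely be the only available partner for up to $\Delta-1$ hyperedges of $\cE_1$. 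Even in the $\Delta=3$ case the iteration is not justified: after committing to $e$ as a $3$-subset you must solve a residual instance in which the nodes of $e\cap(V\setminus V^*)$ are no longer available as partners and the degrees of the surviving $V^*$-nodes may have dropped, so the inductive statement you would need is not the theorem itself and is never formulated. The $\Delta=2$ base case is likewise only gestured at (note that there Theorem~\ref{thm:max} covers the hyperedge side of the Levi graph, not $V^*$, so even the initial matching $M$ is not guaranteed by the cited theorem).

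For contrast, the paper does not work in the Levi graph at all. It deduces Theorem~\ref{thm:3unif} from a statement about oddly uniform hypergraphs and maximum \emph{quasi-degree} (Corollary~\ref{cor:ext}), which for $3$-uniform hypergraphs coincides with maximum degree. That corollary is reduced to the quasi-regular case by taking three disjoint copies of $H$ and padding each deficient node with triples joining its three copies; the quasi-regular case (Theorem~\ref{thm:psm}) is then proved by replacing each hyperedge with a clique and running the Gallai--Edmonds decomposition of the resulting regular graph, using factor-criticality of the $D$-components together with Theorems~\ref{thm:max} and~\ref{thm:MD}. The regularization step is precisely what dissolves the case distinction on $\Delta$ that your approach gets stuck on; if you want to salvage your route, you would need an analogous device that puts all nodes of $V^*$ on an equal footing before any Hall-type argument is attempted.
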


Theorem \ref{thm:3unif} is the special case of a more general result (Corollary \ref{cor:ext})
that we introduce below.  Before doing so, we show that Theorem \ref{thm:3unif} implies Conjecture~\ref{conj:liang}.

\begin{proof}[Proof of Conjecture~\ref{conj:liang}]
Recall that it suffices to verify the conjecture for graphs $G=(S,T;E)$ with $d_E(v)=3$ for every $v\in T$. Such a $G$ is the incidence graph (or Levi graph) of a $3$-uniform hypergraph $H=(S,\mathcal{E})$ in which each node has degree at most $4$.

Let $S'\subseteq S$ denote the set of nodes having degree $4$ in $H$. By Theorem \ref{thm:3unif}, $H$ has an extended matching covering $S'$. That is, $S'$ can be covered by pairwise node-disjoint $S$-links and $S$-claws of $G$, where an $S$-claw is a star with $3$ edges having its center node in $T$. We denote the edge-set of these $S$-links and claws by $N$.

Let $T'$ be the set of nodes in $T$ not covered by $N$. As $d_{E-N}(v)\leq 3$ for each $v\in S$, $T'$ can be covered by a matching $M$ disjoint from $N$, by Theorem \ref{thm:max}. By leaving out an edge from each $S$-claw of $N$, we get a matching $M$ and a family of $S$-links whose union together covers $T$.
\end{proof}

Let us now introduce and prove a generalization of Theorem \ref{thm:3unif}.
We call a hypergraph $H=(V,\mathcal{E})$
\textbf{oddly uniform} if every hyperedge has odd cardinality. The
\textbf{quasi-degree} of a node $v\in V$ is defined as
$d^-(v):=\sum[|e|-1:\ v\in e\in\mathcal{E}]$, and the hypergraph is
\textbf{$\Delta$-quasi-regular} (or \textbf{quasi-regular} for short) if $d^-(v)=\Delta$ for each $v\in V$
where $\Delta\in\mathbb{Z}_+$. Note that a uniform regular hypergraph
is quasi-regular.

\begin{thm} \label{thm:psm}
Every oddly uniform quasi-regular hypergraph has a perfect extended matching.
\end{thm}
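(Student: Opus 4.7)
I would translate Theorem~\ref{thm:psm} into a degree-constrained subgraph problem on the Levi graph $G_H=(U_V,U_\cE;E)$. As explained in the paragraph just preceding the theorem, finding a perfect extended matching in $H$ amounts to choosing, for every $v\in V$, a hyperedge $\phi(v)\ni v$ in such a way that for each $e\in\cE$ the preimage size $b(e):=|\phi^{-1}(e)|$ is either even or equal to $|e|$; equivalently, we seek $F\subseteq E(G_H)$ with $d_F(v)=1$ for every $v\in U_V$ and $d_F(u_e)\in\{0,2,\ldots,|e|-1,|e|\}$ for every $u_e\in U_\cE$. A crucial preliminary observation is a parity one: since every $|e|$ is odd, every $|e|-1$ is even, and hence the common quasi-degree $\Delta=\sum_{e\ni v}(|e|-1)$ is automatically even. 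The double-counting identity $\Delta|V|=\sum_e|e|(|e|-1)$ then shows that the pair-multigraph on $V$, obtained by inserting all $\binom{|e|}{2}$ pairs of each hyperedge $e$, is an even-regular multigraph of regularity $\Delta$.

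The main argument would be an induction on $\Delta$. The base case $\Delta=0$ is immediate, since then every hyperedge is a singleton and one picks one singleton per vertex. For the inductive step I would aim to extract from $H$ a ``balanced piece''---a disjoint collection of full hyperedges and pairs whose removal from $V$ leaves a residual hypergraph that is still oddly uniform and quasi-regular, with common quasi-degree $\Delta-2$. The inductive hypothesis then supplies a perfect extended matching of the residual hypergraph, and adjoining the extracted piece yields one for $H$.

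The main obstacle is to guarantee that such a balanced extractable piece always exists. Removing a single hyperedge as full, or a single pair of vertices, almost always destroys odd uniformity (because the trimmed hyperedges $e\setminus X$ can acquire even cardinality) or quasi-regularity (because the quasi-degree loss is concentrated on only a few vertices). Securing a globally balanced extraction will presumably come from a Hall/deficiency-style argument on $G_H$ that turns on the evenness of $\Delta$ together with the identity above. If that direct combinatorial reduction proves unwieldy, an alternative route is to invoke the general factor theorem of Lov\'asz-Corn\'uejols: the allowed degree sets $\{0,2,\ldots,|e|-1,|e|\}$ have only gaps of size one, so the theorem applies, and its Tutte-Berge-type deficiency condition can then be verified directly from the quasi-regularity of $H$.
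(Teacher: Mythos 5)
There is a genuine gap here: both routes you sketch stop exactly where the actual work begins, and you say so yourself. In the inductive route, the existence of a ``balanced extractable piece'' whose removal lowers every quasi-degree by exactly $2$ while preserving odd uniformity is not established, and I do not see how it could be: beyond the difficulty you acknowledge, the induction scheme has a structural flaw, namely that the residual hypergraph's trimmed hyperedges $e\setminus X$ are not hyperedges of $H$, so a perfect extended matching of the residual that uses such a set as a \emph{full} hyperedge does not lift back to an extended matching of $H$ (only the pairs lift). In the alternative route, the reformulation on the Levi graph with degree sets $\{0,2,\dots,|e|-1,|e|\}$ is correct and the gaps do have length one, so the Lov\'asz--Corn\'uejols machinery applies in principle; but ``verifying the deficiency condition from quasi-regularity'' is precisely the content of the theorem, and that verification is nowhere carried out. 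As it stands the proposal establishes only the (correct and useful) parity observations: $\Delta$ is even, and the pair multigraph obtained by replacing each $e$ by its $\binom{|e|}{2}$ pairs is $\Delta$-regular.

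For comparison, the paper's proof starts from exactly the multigraph $G$ you construct, but instead of inducting it attempts a perfect matching of $G$ directly and analyzes the failure via the Gallai--Edmonds decomposition $V=D\cup A\cup C$. The two key points you are missing are: (i) any component $K$ of $G[D]$ that does \emph{not} span a hyperedge sends at least $\Delta$ edges to $A$, because every hyperedge meeting $K$ must also meet $A$ and contributes $|e\cap K|\cdot|e\cap A|\ge |e|-1$ edges; combined with $d_{G'}(a)\le\Delta$ for $a\in A$ and Dulmage--Mendelsohn, this lets all such components and all of $A$ be matched simultaneously; and (ii) any component that \emph{does} span a hyperedge $e$ is factor-critical, and since $e$ induces an odd clique, $K-e$ has a perfect matching, so $K$ is finished off by taking $e$ as a full hyperedge. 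Some idea of this kind --- using full hyperedges exactly on the components where a plain matching fails --- is what your proposal still needs.
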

\begin{proof}
Assume that $H=(V,\mathcal{E})$ is an oddly uniform $\Delta$-quasi-regular hypergraph, and let $G=(V,E)$ denote the graph obtained by replacing each hyperedge $e\in\mathcal{E}$ with a complete graph on node-set $e\subseteq V$. That is, there are as many parallel edges between $u$ and $v$ in $E$ as the number of hyperedges containing both $u$ and $v$. Note that the quasi-regularity of $H$ is equivalent to the regularity of $G$.

If $G$ admits a perfect matching $M$, then $M$ is a perfect extended matching of $H$ and we are done.

Assume that $G$ does not have a perfect matching. Take the Gallai-Edmonds decomposition of $G$ into sets $D$, $A$ and $C$ (see Theorem \ref{thm:GE}). 
Let $D_1$ be the union of those connected components of $G[D]$ that span a hyperedge $e\in\mathcal{E}$ in $H$, and $D_2:=D-D_1$. 

\begin{cl}\label{cl:psm}
Every component $K$ of $G[D_1]$ has a perfect extended matching in $H$.
\end{cl}
\begin{proof}
As $K$ is factor-critical, it has a perfect matching after deleting the nodes of any of its odd cycles (including the case when the cycle consists of a single node). Let $e\in\mathcal{E}$ be a hyperedge spanned by $K$. By the above, $G[K-e]$ has a perfect matching, which together with $e$ form a perfect extended matching of $K$, proving the claim.
\end{proof}

\begin{cl}\label{cl:deg}
For every component $K$ of $G[D_2]$ we have $d_{G}(K)\geq\Delta$.
\end{cl}
\begin{proof}
Let $u\in K$ be an arbitrary node. $K$ does not span a hyperedge in $H$, hence for every hyperedge $e$ containing $u$ we have  $e\cap K\neq\emptyset,e\cap A\neq\emptyset$ and $e\subseteq K\cup A$. By the definition of $G$, there are at least $\sum[|e\cap K|\cdot |e\cap A|:\ u\in e\in\mathcal{E}]\geq\sum[|e|-1:\ u\in e\in\mathcal{E}]=\Delta$ edges between $K$ and $A$, thus concluding the proof of the claim.
\end{proof}

Let $G'=(D',A;F)$ denote the bipartite graph obtained from $G$ by deleting the nodes of $C$ and the edges induced by $A$, and by contracting each component of $G[D]$ to a single node (the set of new nodes is denoted by $D'$). Nodes of $D'$ are partitioned into sets $D'_1$ and $D'_2$ accordingly.
As $d_{G'}(v)\leq\Delta$ for each $v\in A$, Claim~\ref{cl:deg} and Theorem~\ref{thm:max} imply that $G'$ has a matching covering $D'_2$. By Theorem \ref{thm:GE} (c), $G'$ has a matching covering $A$, hence the result of Dulmage and Mendelsohn (Theorem \ref{thm:MD}) implies that $G'$ has a matching $M'$ covering $A$ and $D'_2$ simultaneously. Considering $M'$ as a matching in $G$ and using Theorem \ref{thm:GE} (a) and (b), $M'$ can be extended to a matching $M$ of $G$ that covers every node that is in $C\cup A$ or in a component of $G[D]$ that is incident to an edge in $M'$.  By Claim~\ref{cl:psm}, there is an extended matching covering the nodes of the remaining components of $G[D]$, since they fall in $D_1$. The union of $M$ and this extended matching forms a perfect extended matching of $H$. This completes the proof of the theorem.
\end{proof}

As a consequence, we get the following result.

\begin{cor} \label{cor:ext}
Every oddly uniform hypergraph has an extended matching that covers the set of nodes having maximum quasi-degree.
\end{cor}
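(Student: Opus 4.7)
The plan is to repeat the proof of Theorem \ref{thm:psm} almost verbatim, observing that its use of quasi-regularity occurs only inside Claim \ref{cl:deg} and even there only through a single vertex of quasi-degree $\Delta$. Concretely, set $V^* = \{v \in V : d^-(v) = \Delta\}$ and build the multigraph $G = (V, E)$ by replacing each hyperedge $e \in \cE$ with a clique on $e$, so that $d_G(v) = d^-(v)$. Take the Gallai--Edmonds decomposition $V = D \cup A \cup C$ of $G$ as before; if $G$ happens to admit a perfect matching we are immediately done, so assume it does not. Partition the components of $G[D]$ into those spanning some hyperedge ($D_1$) and the rest ($D_2$); Claim \ref{cl:psm} goes through unchanged and produces a perfect extended matching of $H$ on each component of $G[D_1]$.

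The only substantive adaptation is in Claim \ref{cl:deg}: I would show that whenever a component $K$ of $G[D_2]$ meets $V^*$, the bound $d_G(K, A) \ge \Delta$ still holds. Indeed, picking any $u \in K \cap V^*$, the calculation of Claim \ref{cl:deg} applied only to hyperedges through $u$ gives $d_G(K, A) \ge \sum_{e \ni u} |e \cap K|\cdot|e \cap A| \ge \sum_{e \ni u}(|e|-1) = d^-(u) = \Delta$, since each such hyperedge satisfies $e \subseteq K \cup A$ with both $e \cap K$ and $e \cap A$ nonempty. Components of $G[D_2]$ disjoint from $V^*$ contain no vertex that we need to cover and can simply be discarded.

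To conclude, form the bipartite contraction $G' = (D', A; F)$, let $\mathcal{K}^*$ be the subset of $D'_2$ corresponding to components meeting $V^*$, and use $d_{G'}(K) \ge \Delta$ for $K \in \mathcal{K}^*$ together with $d_{G'}(v) \le d_G(v) \le \Delta$ for $v \in A$ to conclude by a Hall-style degree count (or by Theorem \ref{thm:max} after pruning surplus edges at $\mathcal{K}^*$ down to degree $\Delta$) that $G'$ has a matching covering $\mathcal{K}^*$. Combined via Theorem \ref{thm:MD} with the matching of $G'$ covering $A$ guaranteed by Theorem \ref{thm:GE}(c), this yields a matching $M'$ of $G'$ covering $A \cup \mathcal{K}^*$. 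Lifting $M'$ back to $G$ using factor-criticality of the $G[D]$-components saturated by $M'$, invoking Claim \ref{cl:psm} on any unmatched component of $D_1$, and adjoining the perfect matching of $G[C]$ from Theorem \ref{thm:GE}(b), produces the desired extended matching of $H$ covering $V^*$. The main obstacle is purely bookkeeping in this lifting step: one must verify that every $v \in V^*$ is actually covered by the piece it falls into --- trivially for $V^* \cap C$ and $V^* \cap A$, via factor-criticality for $V^* \cap D$ inside a $G[D]$-component saturated by $M'$, and via Claim \ref{cl:psm} for $V^* \cap D_1$ inside a component missed by $M'$.
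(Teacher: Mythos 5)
Your proof is correct, but it takes a genuinely different route from the paper. The paper deduces the corollary from Theorem~\ref{thm:psm} by a padding construction: it takes three disjoint copies of $H$ and, for each deficient node $v$, adds copies of the triple $\{v_1,v_2,v_3\}$ until the hypergraph becomes $\Delta$-quasi-regular (this is where odd uniformity is used a second time, to guarantee that the deficiency $\gamma(v)=\Delta-d^-(v)$ is even, so that the padding by hyperedges of quasi-degree increment $2$ comes out exact); it then restricts the resulting perfect extended matching back to $H$. You instead rerun the Gallai--Edmonds argument of Theorem~\ref{thm:psm} directly, observing that quasi-regularity enters only through Claim~\ref{cl:deg} and only via the single chosen node $u$, so the bound $d_G(K,A)\ge\Delta$ survives for every component $K$ of $G[D_2]$ that meets $V^*=\{v: d^-(v)=\Delta\}$, which is exactly the set of components you must saturate. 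Your localized degree computation, the pruning step before invoking Theorem~\ref{thm:max} (which you are in fact more careful about than the paper is in the proof of Theorem~\ref{thm:psm}), the Dulmage--Mendelsohn combination, and the final lifting/coverage check are all sound; components of $G[D_2]$ missing $V^*$ indeed need no special treatment beyond handling those that happen to be saturated by $M'$ via factor-criticality. What the paper's reduction buys is brevity once Theorem~\ref{thm:psm} is available and a reusable ``complete to the regular case'' template; what your direct argument buys is self-containedness, independence from the parity/padding bookkeeping, and a clearer view of which hypotheses are actually needed where.
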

\begin{proof}
Let $H=(S,\mathcal{E})$ be an oddly uniform hypergraph and let $\Delta$ denote the maximum quasi-degree in $H$. The \textbf{deficiency} of a node $v\in S$ is $\gamma(v):=\Delta-d^-(v)$. A node $v\in S$ is called \textbf{deficient} if $\gamma(v)>0$. As $H$ is oddly uniform, $\gamma(v)$ is even for every node $v$.

It suffices to show that $H$ can be extended to a $\Delta$-quasi-uniform hypergraph $H'=(V',\mathcal{E}')$ by adding further nodes and hyperedges. Indeed, by Theorem~\ref{thm:psm}, $H'$ admits a perfect extended matching whose restriction to the original hypergraph gives an extended matching covering each node having quasi-degree $\Delta$.

If there is no deficient node in $H$, then we are done. Otherwise consider the hypergraph obtained by taking the disjoint union of three copies of $H$, denoted by $H_1$, $H_2$ and $H_3$, respectively. For each deficient node $v\in S$, add $\gamma(v)$ copies of the hyperedge $\{v_1,v_2,v_3\}$ to the hypergraph, where $v_i$ denotes the copy of $v$ in $H_i$. The hypergraph $H'$ thus obtained is clearly $\Delta$-quasi-regular.
\end{proof}

\section{Complexity result} \label{sec:np}

In what follows we show that deciding the existence of a $\mathtt{V}$-free $2$-matching covering $T$ is NP-complete in general. We will use reduction from the following problem (see \cite[(SP2)]{gj}).

\begin{thm}[$3$-dimensional matching]
Let $H=(X,Y,Z;\mathcal{E})$ be a tripartite $3$-regular $3$-uniform hypergraph, meaning that each node $v\in X\cup Y\cup Z$ is contained in exactly $3$ hyperedges, and each hyperedge $e\in \mathcal{E}$ contains exactly one node from all of $X,Y$ and $Z$. It is NP-complete to decide whether $H$ has a perfect matching, that is, a $1$-regular sub-hypergraph.
\end{thm}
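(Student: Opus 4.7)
The approach splits into two parts. Membership in NP is immediate: a claimed perfect matching consists of exactly $n=|X|=|Y|=|Z|$ hyperedges, a polynomial-size certificate that can be verified in polynomial time by checking pairwise node-disjointness and that every node is hit.

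For NP-hardness, the plan is to reduce from the unrestricted $3$-dimensional matching problem (3DM), which is NP-complete by Garey and Johnson \cite{gj}. Starting from a general 3DM instance $H_0=(X,Y,Z;\cE_0)$ with $|X|=|Y|=|Z|$ but with no regularity assumption, the goal is to transform $H_0$ in polynomial time into a tripartite $3$-uniform $3$-regular instance $H_1$ such that $H_0$ has a perfect matching if and only if $H_1$ does. Two kinds of local surgery are needed.

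First, high-degree nodes must be split. For each node $v\in X$ lying in $k>3$ hyperedges $e_1,\ldots,e_k$, I would introduce $k$ copies $v_1,\ldots,v_k$, substitute $v_i$ for $v$ in $e_i$, and attach an ``exactly-one-selector'' gadget on $\{v_1,\ldots,v_k\}$ built from fresh auxiliary $Y$- and $Z$-nodes. The gadget must be tripartite, $3$-uniform, $3$-regular on its internal auxiliary nodes, make every $v_i$ degree-$3$ overall, and admit precisely those perfect matchings of its own node-set that leave exactly one $v_i$ free to be matched externally through $e_i$. Analogous constructions handle high-degree nodes in $Y$ and $Z$. Second, low-degree nodes must be raised to degree $3$ by attaching padding hyperedges through fresh filler nodes; the fillers are grouped into small tripartite $3$-regular components each admitting a unique perfect matching, so that padding does not create or destroy solutions. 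A final counting step keeps $|X|$, $|Y|$ and $|Z|$ equal.

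The main obstacle is the correctness of the high-degree splitting gadget: it must simultaneously achieve $3$-regularity, $3$-uniformity and the tripartite property, and its selection behaviour must be \emph{exact}---no perfect matching of the gadget may leave zero copies, or two or more copies, externally matched. A promising design uses two parallel cyclic ``rings'' of auxiliary triples around $v_1,\ldots,v_k$, one indexed in $Y$ and one in $Z$, so that any internal matching of the gadget is forced to pick a rotation of the ring leaving a single $v_i$ for the outside. Once such a gadget is verified, a routine check confirms that the overall transformation runs in polynomial time and that $H_1$ indeed has the required structure, completing the reduction.
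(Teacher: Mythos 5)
The paper does not actually prove this theorem: it invokes it as a known black box, citing Garey and Johnson \cite{gj}, and only uses it as the source problem for the reduction in Theorem \ref{thm:covernp}. So you are attempting something the authors deliberately avoided, and your write-up has a genuine gap at exactly the point where all the difficulty lives. The entire content of the hardness direction is the ``exactly-one-selector'' gadget, and you never construct it: you list the properties it must have (tripartite, $3$-uniform, internally $3$-regular, raising each copy $v_i$ to degree $3$, and admitting precisely those perfect matchings that leave exactly one $v_i$ uncovered internally) and then write ``Once such a gadget is verified\dots''. That is a specification, not a proof. The ``two parallel cyclic rings'' idea is plausible in spirit, but rings of triples naturally enforce a parity or rotation constraint, and it is not at all automatic that the set of internal perfect matchings is in bijection with the choices of a \emph{single} external $v_i$; one must exhibit the hyperedges explicitly and characterize all perfect matchings of the gadget. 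Until that is done, the reduction is not established.

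The padding step for low-degree nodes hides a second, independent problem. If you raise the degree of an original node $v$ by adding hyperedges of the form $\{v,y',z'\}$ with fresh fillers $y',z'$, you have given $v$ a new way to be covered; a perfect matching of $H_1$ could then cover $v$ by a padding hyperedge, and its restriction to the original hyperedges would fail to be a perfect matching of $H_0$. Saying that the filler components each have ``a unique perfect matching'' does not resolve this, because the relevant question is whether some perfect matching of the \emph{whole} hypergraph uses a padding hyperedge through $v$, and your construction as described does not rule that out. You would need to argue that in every perfect matching of $H_1$ the fillers are forced to be covered by filler-only hyperedges, which again requires an explicit gadget and a case analysis. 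Given that the target statement is a standard, citable restriction of $3$-dimensional matching, the honest options are either to cite it as the paper does, or to carry out the degree-reduction and padding gadgets in full detail; the current proposal does neither.
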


Our proof is inspired by the construction of Li for proving the NP-hardness of maximizing the number of nodes covered by the edges in a $2$-restricted $2$-matching \cite{li}.

\begin{thm} \label{thm:covernp}
Given a bipartite graph $G=(S,T;E)$ with maximum degree $4$, it is NP-complete to decide whether $G$ has a $\mathtt{V}$-free $2$-matching covering $T$.
\end{thm}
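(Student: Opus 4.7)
Membership in NP is clear: a candidate set of edges can be checked in polynomial time to be a $2$-matching, to cover $T$, and to have no connected component isomorphic to a $\mathtt{V}$-path. For NP-hardness, the plan is to reduce from the $3$-dimensional matching problem on a tripartite $3$-regular $3$-uniform hypergraph $H=(X,Y,Z;\mathcal{E})$ (recalled just above the statement). Given such an $H$, I would build a bipartite graph $G=(S,T;E)$ of maximum degree $4$ so that $G$ has a $\mathtt{V}$-free $2$-matching covering $T$ if and only if $H$ has a perfect matching. The construction replaces every $v\in X\cup Y\cup Z$ with a small \emph{vertex gadget} $G_v$ carrying three ports, one per hyperedge containing $v$, and every $e\in\mathcal{E}$ with a small \emph{hyperedge gadget} $G_e$ carrying three ports, one per vertex of $e$; the gadgets are then glued together by identifying the port corresponding to an incidence $v\in e$ in $G_v$ with the one in $G_e$.

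Using the equivalent reformulation from Section~\ref{sec:intro}, a $\mathtt{V}$-free $2$-matching covering $T$ is nothing but a matching together with a node-disjoint family of $S$-links whose union covers $T$; hence every port $T$-node is either matched to an $S$-neighbour or sits as the middle of an $S$-link, and we may call a port \emph{active} when its interface edge is included. The gadgets will be designed so that, whenever their internal $T$-nodes are covered by a $\mathtt{V}$-free $2$-matching, the vertex gadget $G_v$ forces \textbf{exactly one} of its three ports to be active (modelling the unique hyperedge of the perfect matching that covers $v$), while the hyperedge gadget $G_e$ forces \textbf{all three or none} of its ports to be active (modelling whether $e$ belongs to the perfect matching). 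Once such gadgets are at our disposal, both directions of the reduction are immediate: a perfect matching $\mathcal{M}\subseteq\mathcal{E}$ determines which gadgets to activate and thereby a $\mathtt{V}$-free $2$-matching covering $T$; and conversely, the set of active hyperedges of any such $2$-matching is forced by the gadget properties to be a perfect $3$-dimensional matching of $H$.

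The main obstacle is the explicit design of the two gadgets, and the exhaustive verification that the "exactly one" and "all-or-nothing" behaviours are enforced purely by the requirement of covering the internal $T$-nodes with a $\mathtt{V}$-free $2$-matching, while keeping the graph bipartite and the maximum degree at most $4$. Following the spirit of Li's construction \cite{li}, the idea is to attach pendant degree-one $T$-nodes to carefully chosen $S$-nodes of each gadget: covering such a pendant forces a matching edge to be used and thus eats into the degree budget of that $S$-node, which then leaves only the configurations we wish to encode. Correctness of each gadget would then be established by a short case analysis enumerating the ways its internal $T$-nodes can be covered, together with the observation that the forbidden $\mathtt{V}$-path components rule out the remaining "cheating" configurations; the max-degree bound of $4$ would be maintained by keeping the number of outgoing/pendant edges at every $S$-node below this threshold.
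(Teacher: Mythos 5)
Your proposal correctly identifies the source problem (3-dimensional matching) and the right general strategy (gadgets glued at ports, pendant degree-one $T$-nodes used to consume the degree budget of $S$-nodes, in the spirit of Li's construction), and membership in NP is indeed routine. But the proof has a genuine gap, and you name it yourself: the explicit gadgets and the case analysis verifying their behaviour are never given. In a hardness reduction of this kind, the gadget design \emph{is} the proof; everything else in your write-up is the standard framing that any 3DM reduction shares. As written, there is no way to check that gadgets with the "exactly one active port" and "all three or none" properties exist within the constraints (bipartite, maximum degree $4$, behaviour forced solely by covering $T$ with a $\mathtt{V}$-free $2$-matching), so the argument cannot be accepted.

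For comparison, the paper's construction is leaner than the architecture you sketch and worth internalizing. Each $x\in X$ and $y\in Y$ gets a single $S$-node $s_x$ with a pendant $T$-node $t_x$ (forcing $s_xt_x\in M$ and hence leaving room for at most one further $M$-edge at $s_x$ -- this is your "at most one port" device); each $z\in Z$ gets only a bare $T$-node $t_z$, which must be covered, giving "at least one" for free. Each hyperedge $e=\{x,y,z\}$ gets a path $t^e_1s^e_1t^e_2s^e_2t^e_3$ with extra edges $s_xt^e_1$, $s_yt^e_1$, $t_zs^e_1$; $\mathtt{V}$-freeness forces $s^e_1t^e_2,s^e_2t^e_3\in M$, so $s^e_1$ can take at most one of $t^e_1$ and $t_z$, and choosing $t_z$ forces both $s_xt^e_1$ and $s_yt^e_1$ into $M$ (again by $\mathtt{V}$-freeness at $t^e_1$), thereby "reserving" $x$ and $y$. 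Crucially, the paper does \emph{not} enforce "exactly one" locally at the vertex gadgets: it enforces "at most one" at $x$ and $y$ and "at least one" at $z$, and then closes the gap with the counting identity $|X|=|Y|=|Z|$. Your stronger local requirements are not wrong in principle, but they would make the gadgets harder to build, and in any case you have not built them.
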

\begin{proof}
We prove the theorem by reduction from the $3$-dimensional matching problem. Take a $3$-uniform $3$-regular tripartite hypergraph $H=(X,Y,Z;\mathcal{E})$. For a hyperedge $e\in\mathcal{E}$, we use the following notions: $x_e:=e\cap X$, $y_e:=e\cap Y$ and $z_e:=e\cap Z$.

\begin{figure}[ht]
\centering
\includegraphics[width=0.6\textwidth]{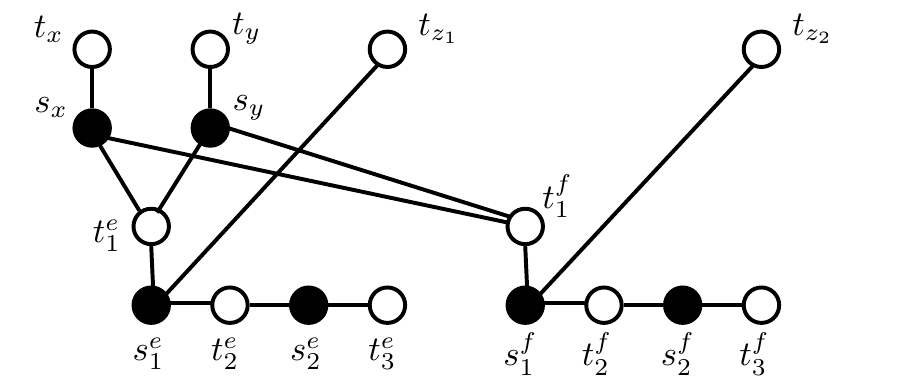}
\caption{Gadgets corresponding to hyperedges $e=\{x,y,z_1\}$ and $f=\{x,y,z_2\}$}
\label{fig:gadget}
\end{figure}

We construct an undirected bipartite graph as follows. For each node $x\in X$ and $y\in Y$, add a pair of nodes $s_x,t_x$ and $s_y,t_y$ to $G$, respectively, with $s_x,s_y\in S$ and $t_x,t_y\in T$. For each node $z\in Z$, add a single node $t_z$ to $T$. Furthermore, for each $x\in X$ and $y\in Y$ add the edges $s_xt_x$ and $s_yt_y$ to $E$.

We assign a path $P_e$ with node set $V(P_e)=\{t^e_1,s^e_1,t^e_2,s^e_2,t^e_3\}$ and edge set $E(P_e)=\{t^e_1s^e_1,s^e_1t^e_2,t^e_2s^e_2,s^e_2t^e_3,t^e_3\}$ of length four to each hyperedge $e\in\mathcal{E}$ and add edges $s_{x_e}t^e_1$, $s_{y_e}t^e_1$ and $t_{z_e}s^e_1$ to $E$ (see Figure~\ref{fig:gadget}). It is easy to check that the graph thus arising is bipartite and has maximum degree $4$ (here we use that every node $v\in X\cup Y\cup Z$ is contained in exactly $3$ hyperedges of $H$).

We claim that $H$ admits a perfect matching if and only if $G$ has a $\mathtt{V}$-free $2$-matching covering $T$, which proves the theorem. Assume first that $H$ has a perfect matching and let $\mathcal{M}\subseteq \mathcal{E}$ be the set of matching hyperedges. Then
\begin{equation*}
M:=\bigcup_{e \in \mathcal{M}}\{s_{x_e}t_{x_e},s_{y_e}t_{y_e},s_{x_e}t^e_1,s_{y_e}t^e_1,t_{z_e}s^e_1,E(P_e)-t^e_1s^e_1\}\cup\bigcup_{e\not\in \mathcal{M}}E(P_e)
\end{equation*}
is a $\mathtt{V}$-free $2$-matching covering $T$ (see Figure~\ref{fig:vfree}).

\begin{figure}[h]
\centering
\includegraphics[width=0.6\textwidth]{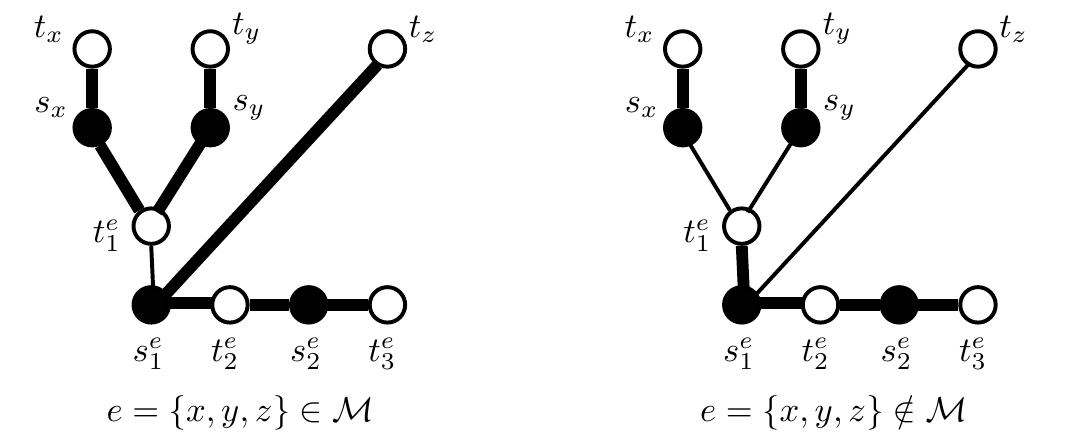}
\caption{Edges included in $M$ depending on whether $e\in\mathcal{M}$ or not}
\label{fig:vfree}
\end{figure}

For the other direction, take a $\mathtt{V}$-free $2$-matching $M$ of $G$ covering $T$. Observe that $s_xt_x,s_yt_y\in M$ for each $x\in X$ and $y\in Y$ as $M$ covers $T$. Moreover, $M$ is $\mathtt{V}$-free hence $t^e_1s^e_1\not\in M$ implies $s_{e_x}t^e_1,y_{e_x}t^e_1\in M$. We may assume that $E(P_e)-t^e_1s^e_1\subseteq M$ for each $e\in\mathcal{E}$. Indeed, $M$ has to cover $t^e_2$ and $t^e_3$, hence the $\mathtt{V}$-freeness of $M$ implies $s^e_1t^e_2,s^e_2t^e_3\in M$. Consequently, $t^e_2s^e_2\in M$ can be assumed.

We claim that $d_M(t_z)=1$ for each $z\in Z$. Indeed, if $t_{z_e}s^e_1\in M$ for some $e\in \mathcal{E}$ then $s_{x_e}t^e_1,s_{y_e}t^e_1\in M$. In other words, if $t_{z_e}s^e_1\in M$ then $e$ `reserves' nodes $s_{x_e},s_{y_e}$ and $t_{z_e}$ for $M$ being a $\mathtt{V}$-free $2$-matching. On the other hand, for each $x\in X$ there is at most one $e\in\mathcal{E}$ such that $s_{x_e}t^e_1\in M$, and the same holds for each $y\in Y$. As the hypergraph is $3$-uniform and $3$-regular, we have $|X|=|Y|=|Z|$. Hence the number of edges of form $t_{z_e}s^e_2$ in $M$ can not exceed the cardinality of these sets. Let
\begin{equation*}
\mathcal{M}:=\{e\in\mathcal{E}:\ t_{z_e}s^e_2\in M\}.
\end{equation*}
By the above, $\mathcal{M}$ is a $1$-regular subhypergraph, thus concluding the proof.
\end{proof}

\section{Acknowledgement}

The first and the second authors were supported by the Hungarian Scientific Research Fund - OTKA, K109240.

\bibliographystyle{abbrv}
\bibliography{v-free}

\end{document}